\newtheorem{theorem}{Theorem}[section]
\newtheorem*{theorem*}{Theorem}
\newtheorem{lemma}[theorem]{Lemma}
\newtheorem{proposition}[theorem]{Proposition}
\title{On Signs of Hecke eigenvalues of Ikeda Lifts} 
\author{Nagarjuna Chary Addanki} 
\begin{document}
\maketitle

\section{Introduction} 

Siegel modular forms of genus $n$ and weight $k$ with respect to $\Gamma_n = Sp_{2n}(\mathbb{Z})$ are holomorphic functions on the Siegel upper half space $\mathbb{H}_n$. They are generalization of elliptic modular forms, the case $n=1$ gives us the classical modular forms over $SL_2(\mathbb{Z})$. Let $M_k(\Gamma_n)$ denote the space of Siegel modular forms and $S_k(\Gamma_n)$ denote the subspace of cuspidal forms. $S_k(\Gamma_n)$ has a special basis called Hecke eigenforms, these are eigenvectors with respect to operators on $S_k(\Gamma_n)$ called the Hecke operators. W. Duke and \"{O}. Imamo$\bar{g}$lu conjectured that given $f \in S_{2k-n}(\Gamma_1)$, there exists $F \in S_{2k}(\Gamma_n)$, such that the L-functions associated to $F$ and $f$ satisfy certain relations. For further details, check \cite{MR1707138}. Lifts from $n=1$ to $2$ are known as Saito-Kurokawa lifts. A construction of these lifts can be found in \cite{MR0532746}. Ikeda constructed lifts to higher genus. Theorem $3.2$ of \cite{MR1884618} explains the construction of $F_f \in S_{k+n}(\Gamma_{2n})$ from $f \in S_{2k}(\Gamma_1)$ for $n \equiv k(mod \ 2)$. These lifts are called Ikeda lifts and in this article we focus on Ikeda lifts for $n=4$ and higher. 

For each positive integer $m$ there is a Hecke operator associated to it, denoted by $T(m)$. For a Hecke eigenform $F$, let $\lambda_F(m)$ denote the eigenvalue of $T(m)$. In 2013, \cite{MR2726725} proved that for two normalized Hecke eigenforms $F,G \in S_k(\Gamma_1)$, if $\text{sign}(\lambda_F(p^r))= \text{sign}(\lambda_G(p^r))$ for almost all $p$ and $r$ then $F=G$. A similar result is proved in \cite{MR4198744} for $n=2$. Theorem $4$ of the article states that under a specific condition, if $F \in S_{k_1}(\Gamma_2)$ and $G \in S_{k_2}(\Gamma_2)$ are in orthogonal compliment of their respective Maass subspaces (the subspace of $S_k(\Gamma_2)$ generated by Saito-Kurokawa lifts) then half of the non zero coefficients in the sequence $\{ \lambda_F(m) \lambda_G(m) \}_{m \geq 1}$ are positive and half are negative. Saito-Kurokawa lifts can be characterized by the sign behavior of eigenvalues. Breulmann, in \cite{MR1719682}, showed that $F \in S_k(\Gamma_2)$ is a Saito-Kurokawa lift if and only if $\lambda_F(m) >0$ for all $ m \geq 1$. Kohnen, in \cite{MR2262899}, showed that a Hecke eigenform $ F \in S_k(\Gamma_2)$ is in the orthogonal compliment of Maass space  if and only if there are infinitely many sign changes in the sequence $\{ \lambda_F(m) \}_{  m \geq 1}$. Kohnen's and Breulmann's results can be combined to conclude that the space $S_k(\Gamma_2)$ breaks into two disjoint subspaces depending on the signs of eigenvalues of Hecke eigenforms. These are a few examples of incredible information stored in the sign behavior of the sequence $\{ \lambda_F(m) \}_{m \geq 1}.$

Mathematicians have studied this sign behavior through various questions. For example, Kohnen and Sengupta, in \cite{MR2326486}, proved that for $n=2$, the first negative eigenvalue occurs for some $m \ll k^2 log^{20}k$. Pitale and Schmidt, in \cite{MR2425722}, proved that there are infinitely many prime numbers $p$ such that the sequence of Hecke eigenvalues  $\{ \lambda_F(p^r) \}_{r \geq 1} $ has infinitely many sign changes. In this article, we focus on eigenvalues of Ikeda lifts of genus $4$ and higher. From Breulmann's result, $F \in S_k(\Gamma_2)$ is a Saito-Kurokawa lift if and only if $\lambda_F(m) \geq 0$ for all $m.$ We ask if a similar result is expected for Ikeda lifts. In this article we prove two theorems on the signs of eigenvalues of Ikeda lifts. The first result is true for Ikeda lifts of all genus. 
\begin{theorem}If $F_f \in S_{k+n}(\Gamma_{2n})$ is the Ikeda lift of $f \in S_{2k}(\Gamma_1)$ then for all large enough primes $p$, $\lambda_{F_f}(p) \geq 0.$
\end{theorem}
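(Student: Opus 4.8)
The plan is to reduce everything to the Ramanujan--Deligne bound for $f$ by writing $\lambda_{F_f}(p)$ explicitly in terms of the $p$-th Hecke eigenvalue $\lambda_f(p)$ of $f$. Ikeda's construction records the Satake parameters of $F_f$ through its standard $L$-function: for the degree-$2n$ lift,
\[ L(s,F_f,\mathrm{St}) \;=\; \zeta(s)\prod_{i=1}^{2n} L(s+k+n-i,\,f), \]
where $L(s,f)=\prod_p\bigl(1-\lambda_f(p)p^{-s}+p^{2k-1-2s}\bigr)^{-1}$. First I would read off the local parameters. Writing the inverse roots of $L(s,f)$ at $p$ as $\alpha_p\,p^{(2k-1)/2}$ and $\alpha_p^{-1}p^{(2k-1)/2}$ with $|\alpha_p|=1$ and $\alpha_p+\alpha_p^{-1}=\lambda_f(p)p^{-(2k-1)/2}$, the factor $L(s+k+n-i,f)$ contributes the pair $\alpha_p^{\pm1}p^{\,i-n-1/2}$. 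As $i$ runs through $1,\dots,2n$ the exponent $i-n-\tfrac12$ runs through $\pm\tfrac12,\pm\tfrac32,\dots,\pm(n-\tfrac12)$, so the standard Satake parameters of $F_f$ form the reciprocal pairs $\{\gamma_j^{\pm1}\}$ with $\gamma_i=\alpha_p\,p^{\,i-1/2}$ and $\gamma_{n+i}=\alpha_p^{-1}p^{\,i-1/2}$ for $1\le i\le n$.

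Next I would turn these into the $T(p)$-eigenvalue. For genus $2n$ the eigenvalue is the first Dirichlet coefficient of the local spinor factor,
\[ \lambda_{F_f}(p)\;=\;\alpha_0\prod_{i=1}^{2n}\bigl(1+\gamma_i\bigr), \]
where $\alpha_0$ is fixed by the similitude relation $\alpha_0^2\prod_i\gamma_i=p^{\,2n(k+n)-n(2n+1)}=p^{\,n(2k-1)}$. With the representatives above $\prod_i\gamma_i=p^{\,n^2}$, so $\alpha_0=p^{\,n(2k-1-n)/2}>0$. Pairing $\gamma_i=\alpha_p p^{\,i-1/2}$ with $\gamma_{n+i}=\alpha_p^{-1}p^{\,i-1/2}$ and using $\alpha_p+\alpha_p^{-1}=\lambda_f(p)p^{-(2k-1)/2}$, each pair expands to $1+p^{2i-1}+\lambda_f(p)p^{\,i-k}$, giving
\[ \lambda_{F_f}(p)\;=\;p^{\,n(2k-1-n)/2}\prod_{i=1}^{n}\Bigl(1+p^{2i-1}+\lambda_f(p)\,p^{\,i-k}\Bigr). \]
I would check this against the Saito--Kurokawa case $n=1$, where it collapses to $\lambda_{F_f}(p)=p^{k}+p^{k-1}+\lambda_f(p)$, in agreement with Breulmann's positivity.

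Finally I would estimate each factor with Deligne's bound $|\lambda_f(p)|\le 2p^{(2k-1)/2}$, which gives $|\lambda_f(p)p^{\,i-k}|\le 2p^{\,i-1/2}$. Setting $u=p^{\,i-1/2}$,
\[ 1+p^{2i-1}+\lambda_f(p)p^{\,i-k}\;\ge\;1+u^2-2u\;=\;(u-1)^2\;\ge\;0. \]
Since $\alpha_0>0$, every factor is nonnegative and hence $\lambda_{F_f}(p)\ge0$; as $(u-1)^2>0$ for any prime $p$ and $i\ge1$, the inequality is in fact strict for all $p$, a fortiori for all large $p$.

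I expect the genuine difficulty to lie entirely in the second step: the Satake parameters are defined only up to the Weyl group of the dual group (each $\gamma_i$ up to inversion), so I must check that the spinor coefficient $\alpha_0\prod(1+\gamma_i)$ is insensitive to the choice of representatives once $\alpha_0$ is coupled to them through the similitude relation---this is a short but essential consistency check, and it is what makes the clean closed form legitimate. After that, the positivity is an immediate corollary of Ramanujan--Deligne; indeed the computation proves more than the stated theorem, namely $\lambda_{F_f}(p)>0$ for every prime $p$, so the restriction to large primes is only a matter of prudence (for instance to sidestep bad primes or normalization conventions at the smallest $p$).
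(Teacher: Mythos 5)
Your route is genuinely different from the paper's: the paper never touches the standard $L$-function, but instead quotes Schmidt's explicit factorization of the local \emph{spinor} factor of $\Pi_{F_f}$ into symmetric-power $L$-factors of $\pi_f$ (equation $(2)$), reads off the coefficient of $p^{-s}$, isolates the leading term $p^{n^2/2}$ (via $\beta(-n^2,n,n)=1$), and concludes positivity only for large $p$ by crude boundedness of the remaining coefficients. Your closed product formula $\lambda_{F_f}(p)=p^{n(2k-1-n)/2}\prod_{i=1}^{n}\bigl(1+p^{2i-1}+\lambda_f(p)p^{\,i-k}\bigr)$ is correct: for $n=2$ it expands to exactly the expression in Lemma $3.2$, and your factor-by-factor use of Deligne's bound recovers, and extends to all $n$, the strengthening of Proposition $3.3$ that $\lambda_{F_f}(p)>0$ for \emph{every} prime. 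So the payoff of your approach, if completed, is strictly greater than the theorem as stated.

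However, there is one genuine gap, and it is not the one you flagged. The Weyl-invariance of $a_{0,p}\prod_i(1+a_{i,p})$ under $a_{i,p}\mapsto a_{i,p}^{-1}$, $a_{0,p}\mapsto a_{0,p}a_{i,p}$ is indeed a routine check. The real problem is that the standard $L$-function determines only the unordered classes $\{a_{i,p}^{\pm1}\}$, and the similitude relation then determines only $a_{0,p}^{2}$; your choice $\alpha_0=+p^{n(2k-1-n)/2}$ of square root is precisely the overall sign of $\lambda_{F_f}(p)$ that you are trying to establish. That this sign cannot come for free from the standard $L$-function is already visible at genus $1$: there the spinor coefficient is $a_{0}+a_{0}a_{1}=a_f(p)$, the standard (symmetric-square) $L$-function knows $a_1$ up to inversion and the similitude relation gives $a_0^2a_1=p^{2k-1}$, yet the sign of $a_f(p)$ is certainly not determined by this data. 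To close the gap you need local input beyond the standard $L$-function --- for instance Schmidt's Proposition $2.2.1$, which is exactly what the paper invokes: it hands you the full multiset of $2^{2n}$ spinor Satake parameters with the correct normalization and in particular fixes the sign of $a_{0,p}$. With that input substituted for your step one, your argument goes through and is the cleaner and stronger one.
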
 
The second result is for $n=4$. 
\begin{theorem} $F_f \in S_{k+2}(\Gamma_4)$ be the Ikeda lift of $f \in S_{2k}(\Gamma_1)$. Given $r$ there exists a positive constant $C_r$ such that  $\lambda_{F_f}(p^r) \geq 0$ for all $ p > C_r$. \end{theorem}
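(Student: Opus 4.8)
The plan is to read off the Satake parameters of $F_f$ from Ikeda's description of its standard $L$-function and then to control $\lambda_{F_f}(p^r)$ through its leading term in $p$. Writing the normalized Satake parameter of $f$ at $p$ as $\alpha_p = e^{i\theta_p}$ (so that $a_f(p) = p^{(2k-1)/2}(\alpha_p + \alpha_p^{-1})$), Ikeda's identity $L(s, F_f, \mathrm{std}) = \zeta(s)\prod_{j=1}^{4}L(s+k+2-j,f)$ shows that the standard parameters of $F_f$ are $\{\alpha_p^{\pm1}p^{\pm1/2},\,\alpha_p^{\pm1}p^{\pm3/2}\}$. From these I would recover the classical (spin) parameters $\alpha_0,\alpha_1,\alpha_2,\alpha_3,\alpha_4$: using $\alpha_0^2\alpha_1\alpha_2\alpha_3\alpha_4 = p^{4(k+2)-10}$ one gets $\alpha_0 = p^{2k-3}$ and $\{\alpha_1,\alpha_2,\alpha_3,\alpha_4\} = \{\alpha_p p^{3/2},\,\alpha_p p^{1/2},\,\alpha_p^{-1}p^{1/2},\,\alpha_p^{-1}p^{3/2}\}$. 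The $2^4=16$ spin parameters are then $\beta_I = \alpha_0\prod_{i\in I}\alpha_i$ for $I\subseteq\{1,2,3,4\}$, and a direct inspection of their $p$-orders shows that $\beta_{\{1,2,3,4\}} = \alpha_0\alpha_1\alpha_2\alpha_3\alpha_4 = p^{2k+1}$ is real, positive, and the unique parameter of maximal order, the next largest ones being of size $\asymp p^{2k+1/2}$.

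Next I would invoke the Andrianov-type rational expression for the Hecke series, $\sum_{r\ge 0}\lambda_{F_f}(p^r)X^r = N(X)\big/\prod_{j=1}^{16}(1-\beta_j X)$, in which $\prod_{j}(1-\beta_j X)$ is the local spinor factor and $N(X)$ is the explicit correction polynomial of degree $8$, whose coefficients are built from powers of $\alpha_0^{2}$ and the elementary symmetric functions of $\alpha_1,\dots,\alpha_4$. Expanding coefficient-wise yields $\lambda_{F_f}(p^r) = \sum_{i}N_i\,h_{r-i}(\beta_1,\dots,\beta_{16})$, where $h_m$ is the complete homogeneous symmetric polynomial. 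Because $\beta_{\{1,2,3,4\}} = p^{2k+1}$ strictly dominates every other spin parameter, one has $h_m = p^{(2k+1)m}\bigl(1+O(p^{-1/2})\bigr)$, the leading coefficient being exactly $1$ and, decisively, independent of $\theta_p$; this is the phenomenon that fails at genus $1$, where the analogous leading coefficient is a sign-varying Chebyshev value.

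To conclude I would show that every numerator contribution is of strictly lower order, namely $v_p(N_i) < (2k+1)i$ for $i\ge 1$: the factors of $\alpha_0^{2}$ entering $N$ carry order $4k-6$ apiece, comfortably below the $4k+2 = v_p(\beta_{\{1,2,3,4\}}^{2})$ available in the corresponding slot, so that $N_i h_{r-i} = O\bigl(p^{(2k+1)r-1/2}\bigr)$. Summing the finitely many terms gives $\lambda_{F_f}(p^r) = p^{(2k+1)r}\bigl(1+O_r(p^{-1/2})\bigr)$, which is positive once $p$ exceeds a constant $C_r$ depending only on $r$; specializing to $r=1$ also recovers the first theorem. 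The main obstacle is exactly the middle step: pinning down the precise genus-$4$ numerator $N(X)$ and verifying the order bounds $v_p(N_i) < (2k+1)i$ uniformly, so as to guarantee both that the dominant term is the $\theta_p$-free power $p^{(2k+1)r}$ and that no numerator coefficient competes with it at top order. This is the computation that stays manageable for genus $4$ but whose complexity grows rapidly with the genus, which is why the second theorem is confined to this case.
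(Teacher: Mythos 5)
Your skeleton is the same as the paper's: Andrianov's rational expression $\sum_r\lambda_{F_f}(p^r)x^r=P_{F_f,p}(x)/Q_{F_f,p}(x)$, the sixteen spin parameters of the Ikeda lift, identification of the unique dominant parameter $p^{2k+1}$ (your $\beta_{\{1,2,3,4\}}$ agrees with the factor $(1-p^{2k-1}p^{2}x)$ in Lemma 3.1), and a leading-term argument giving $\lambda_{F_f}(p^r)=p^{(2k+1)r}(1+O_r(p^{-1/2}))$, which is consistent with the paper's conclusion. Your use of complete homogeneous symmetric polynomials $h_m(\beta_1,\dots,\beta_{16})$ is a cleaner bookkeeping device than the paper's Breulmann-style partial fractions (it also sidesteps the repeated root $p^{2k-1}$ that forces the paper's $a_1/(-1+p^{2k-1}x)^2$ term). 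However, there are two genuine gaps. First, you propose to read the spin parameters off Ikeda's \emph{standard} $L$-function. The standard $L$-function only determines the multiset $\{\alpha_1^{\pm1},\dots,\alpha_4^{\pm1}\}$ and the square $\alpha_0^2=p^{4k-6}$; your step ``one gets $\alpha_0=p^{2k-3}$'' silently discards $\alpha_0=-p^{2k-3}$, and the wrong choice reverses the sign of $\lambda_{F_f}(p^r)$ for odd $r$, i.e.\ it flips the theorem. More generally the standard $L$-function does not determine the spinor $L$-function; the paper avoids this entirely by invoking Schmidt's explicit formula (equation (2)) for $L_p(s,\Pi_{F_f},\mathrm{spin})$ of an Ikeda lift, which is what you should cite here.

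Second, the step you yourself flag as ``the main obstacle''---pinning down Vankov's numerator and verifying $v_p(N_i)<(2k+1)i$ for $i\ge1$ so that no numerator coefficient competes with $h_r$ at top order---is not a routine detail to be deferred: it is the actual content of the genus-$4$ restriction and occupies Lemma 3.2 and the appendix of the paper. Without it you cannot rule out that some $N_ih_{r-i}$ has $p$-order $\ge(2k+1)r$, and your heuristic (``factors of $\alpha_0^2$ carry order $4k-6$ apiece'') is not a proof, since the $N_i$ are built from Hecke operators $T_i(p^2)$ whose eigenvalues on $F_f$ must first be computed (the paper does this by matching coefficients of $Q_{F_f,p}$ against Vankov's generic $Q_p$). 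A symptom of not having looked at the actual polynomial: Vankov's numerator for genus $4$ has degree $14$, not $8$. So the proposal is the right strategy and predicts the correct dominant term, but as written it is incomplete at exactly the point where the work lies, and the passage from the standard to the spinor parameters needs to be replaced by (or justified via) Schmidt's computation.
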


In \cite{MR0432552}, it is proved that for a general $n$, $-\lambda_F(p)$ is the coefficient of $p^{-s}$ in the spin L-function of $F$. Hence the spin L-function is enough to determine the sign of $\lambda_F(p)$. Schmidt, in \cite{MR1993950}, found an explicit expression for the spin L-function of the Ikeda lift $F_f$ in terms of the L-function of $f$. In section $2$, we talk about the basics of Siegel modular forms. We define objects like cuspidal forms, Hecke operators, spin L-functions, Petterson inner product and so on. Using Schmidt's formula for the spin L-function for Ikeda lifts and Andrianov's result, we compute $\lambda_F(p)$ and prove Theorem $1.1.$

To compute $\lambda_F(p^r)$ for higher powers of $p$, we need more information. Andrianov, in \cite{MR0432552}, proved that for any $F \in S_k(\Gamma_n)$, there are polynomials $Q_{F,p}(x)$ and $P_{F,p}(x)$ such that 
$$ \sum_{r=0}^\infty  \lambda_F(p^r)p^{-rs} = \frac{P_{F,p}(p^{-s})}{Q_{F,p}(p^{-s})} $$ and $Q_{F,p}(p^{-s})$ is the inverse of the spin L-function. Hence we need $P_{F,p}(x)$ and $Q_{F,p}(x)$ to compute $\lambda_F(p^r)$. $P_{F,p}(x)$ are known only up to $n= 4$, and the case of $n=4$ has been recently shown by Vankov in \cite{MR2780630}.  In section $3$ we compute $\lambda_F(p^r)$ for an Ikeda lift $F \in S_k(\Gamma_4)$. We use Vankov's result to compute the polynomial $P_{F,p}(x)$ and Schmidt's result to compute $Q_{F,p}(x)$. We use Andrianov's equation to find an explicit formula for $\lambda_{F}(p^r)$ and prove Theorem $1.2.$

\section{Siegel Modular Forms and Signs of $\lambda_F(p)$ for Ikeda lifts}

For any ring $R$, let $$G(R) \coloneqq GSp_{2n}(R)  =  \Big\{   g = \begin{pmatrix}
A & B\\
C & D 
\end{pmatrix} \in GL_{2n}(R) : g^t J g = \mu(g) J, \ J = \begin{pmatrix}
0 & 1_n\\
-1_n & 0 
\end{pmatrix}  \Big\} $$ where $\mu$ is the similitude homomorphism, $1_n$ is identity matrix of size $n$ and $A,B,C,D \in M_n(R)$. 
$$ Sp_{2n}(R) \coloneqq \{ g \in G(R) :  \mu(g) = 1 \} \quad \Gamma_n \coloneqq Sp_{2n}(\mathbb{Z}) .$$

A Siegel modular form F, of genus $n$ and weight $k$, with respect to $\Gamma_n$, is a holomorphic function on the Siegel upper half space
$$ \mathbb{H}_n \coloneqq \{ Z : Z \in M_n(\mathbb{C}), \ \prescript{t}{}{Z} =Z \ \text{and} \ Im(Z) >0 \} $$ satisfying
\begin{enumerate} \item modularity condition 
$$ F((AZ+B)(CZ+D)^{-1}) = det(CZ+D)^k F(Z) \quad \forall \ \begin{pmatrix}
A & B\\
C & D
\end{pmatrix} \in \Gamma_n \  \text{and} \  Z \in \mathbb{H}_n $$

\item and for $n=1$, $F(Z)$ is bounded on $\{ Z = X+i Y : Y \geq Y_0 \} \ \forall \ Y_0 > 0.$ \end{enumerate} 

Holomorphy and modularity imply that a Siegel modular form has a Fourier expansion of the form

$$ F(Z) = \sum_{\substack{ T= T^t \ T \geq 0 \\ T \ \text{half integral}}} A(T) e^{ 2 \pi i tr(TZ)} .$$

Let $M_k(\Gamma_n)$ denote the space of Siegel modular forms of genus $n$ and weight $k$. $F$ is called cuspidal if $A(T) = 0$ unless $T > 0$ and let $S_k(\Gamma_n)$ denote the subspace of cusp forms. For $F_1 \in S_k(\Gamma_n)$ and $F_2 \in M_k(\Gamma_n)$ we define the Petterson inner product 
$$ \langle F,G \rangle = \int_{\Gamma_n / \mathbb{H}_n} F_1(Z) \overline{F_2(Z)} \frac{dXdY}{det{Y}^{n+1-k}}.$$

For each $g \in G(\mathbb{Q}) \cap M_{2n}(\mathbb{Z})$ with a positive $\mu(g)$, a Hecke operator $T(g)$  on $M_k(\Gamma_n)$ is defined as follows,

$$ \text{for F}\in M_k(\Gamma_n), \ T(g)F \coloneqq \sum_i F|_kg_i \quad \text{where} \  \Gamma_n g \Gamma_n = \sqcup_i \Gamma_n g_i,  \quad g_i = \begin{pmatrix} 
    A_i & B_i  \\
    C_i & D_i  \\
  
    \end{pmatrix}$$
$$ \text{and} \ F|_kg_i(Z) = \mu (g)^{nk-\frac{n(n+1)}{2}}det(C_iZ+D_i)^{-k}F((AZ+B)(CZ+D)^{-1}) .$$
For every positive integer $m$, the above definition is extended to define $T(m)$ as follows 
$$ T(m)F \coloneqq \sum_{\mu(g)=m} T(g)F.$$ 

In Theorem $4.7$ of \cite{MR0432552}, it is shown that there exists a basis for $M_k(\Gamma_n)$ which are eigenforms with respect to all the Hecke operators simultaneously. Since $\displaystyle{\langle T(g)F,G \rangle =\langle F,T(g)G \rangle}$ we conclude that the eigenvalues must be real. For a Hecke eigenform $F \in S_k(\Gamma_n)$, denote $\lambda_F(g)$ as the eigenvalue for the operator $T(g)$. Classically $\lambda_F(g)$ can be expressed in terms of $Satake \ p-parameters$. For any $g$ with $\mu (g) = p^r$, depending on $F$ there are $n+1$ complex numbers $(a_{0,p},a_{1,p},\dots,a_{n,p})$ satisfying 

$$ \lambda(g) = a_{0,p}^r \sum_i \prod_{j=1}^n (a_{j,p}^{-j})^{d_{ij}} \ \text{where} \ \Gamma_n g \Gamma_n = \sqcup_i \Gamma_n g_i, $$
\[
g_i = \begin{pmatrix} 
    A_i & B_i  \\
    0 & D_i  \\
  \end{pmatrix}
\ \text{and} \ 
B_i = \begin{pmatrix} 
    p^{d_{i1}} &    & \ast \\
      & \ddots & \vdots \\
    0 & \dots  & p^{d_{in}} 
    \end{pmatrix}.
\]

The $a_{j,p}$ are called Satake  p-parameters of the eigenform $F$. See first $3$ chapters of \cite{MR3931351} for more on classical Siegel modular forms. 

Theorem $1.3.2$ of \cite{MR0432552} states that there are polynomials $P_{F,p}(x)$ and $Q_{F,p}(x)$ such that

\begin{equation}  \sum_{r = 0}^\infty \lambda_F(p^r)p^{-rs} =\frac{P_{F,p}(p^{-s})}{Q_{F,p}(p^{-s})} \end{equation}

and $$Q_{F,p}(p^{-s}) = (1-a_{0,p}p^{-s})\Big( \prod_{j=1}^n \prod_{1 \leq \delta_1 < \delta_2 <..< \delta_j \leq n} (1-a_{0,p}a_{{\delta_1},p}...a_{{\delta_j},p}p^{-s})\Big).$$

The denominator defines the spinor L-function of $F$, $L(s,F,spin)$ as follows  
$$ L(s,F,spin) \coloneqq \prod_p L_p(s,F,spin) \quad L_p(s,F,spin) = Q_{F,p}(p^{-s})^{-1}. $$

From equation $(1)$ it is clear that to compute $\lambda_F(p^r)$ both $P_{F,p}(x)$ and $L_p(s,F,spin)$ are necessary. If $f \in S_k(\Gamma_1)$ with Satake parameters $ap^{\frac{k-1}{2}}$ and $a^{-1}p^{\frac{k-1}{2}}$ then it is well known that $P_{f,p}(x) \equiv 1 $ and 
$$L_p(s,f,spin) =Q_{f,p}(p^{-s})^{-1} = \Big( (1-ap^{\frac{k-1}{2}}p^{-s})(1-a^{-1}p^{\frac{k-1}{2}}p^{-s}) \Big)^{-1}. $$

For genus 2, in \cite{MR0432552}, Andrianov showed that
$$ Q_{F,p}(p^{-s}) = 1-\lambda_F(p)p^{-s}+(\lambda_F(p)^2-\lambda_F(p^2)-p^{2k-4})p^{-2s}-\lambda_F(p)p^{2k-3}p^{-3s}+p^{4k-6}p^{-4s} $$ and $$ P_{F,p}(p^{-s}) = 1-p^{4k+2}p^{-2s}.$$

Vankov found the polynomials  $P_{F,p}(x)$ for genus $3$ and $4$ in \cite{MR2780630}. For higher genus they are still unknown to the best of our knowledge. 

Siegel modular forms are uniquely defined by their Fourier coefficients. Ikeda defines coefficients $A(B)$ for $F_f$ using the Fourier coefficients of $f$ and proves the following theorem in  \cite{MR1884618}.
\begin{theorem*}[Ikeda] Assume that $n \equiv k(mod \ 2)$. Then 
$$F_f(Z) = \sum_{B \in \mathcal{S}_{2n}(\mathbb{Z})^+} A(B)e^{2 \pi i tr(BZ)}, \quad Z \in \mathbb{H}_{2n} $$ is a Hecke eigen form in $S_{k+n}(\Gamma_{2n})$ and the standard L-function of $F$ is equal to 
$$ \zeta(s)\prod_{i=1}^{2n} L(s+k+n-i,spin,f).$$ 
Here $\mathcal{S}_{2n}(\mathbb{Z})^+$ is set of all positive definite half-integral matrices of size $2n$ and $\zeta(s)$ is Riemann zeta function. 

\end{theorem*}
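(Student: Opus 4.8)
The plan is to construct $F_f$ from the half-integral weight avatar of $f$, and then to verify modularity, the eigenform property, and the $L$-function identity in that order. First I would pass from $f \in S_{2k}(\Gamma_1)$ to a Hecke eigenform $g = \sum_{D} c(D) e^{2\pi i D\tau}$ in the Kohnen plus space $S^{+}_{k+1/2}(\Gamma_0(4))$ via the Shimura--Kohnen correspondence, so that the coefficients $c(D)$, indexed by discriminants with $(-1)^{k}D>0$, encode the arithmetic of $f$; I would also record the Ramanujan-normalized Satake parameter $\alpha_p$ of $f$, defined by $\alpha_p+\alpha_p^{-1}=\lambda_f(p)p^{-(2k-1)/2}$ with $|\alpha_p|=1$. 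For a positive definite half-integral $B\in\mathcal{S}_{2n}(\mathbb{Z})^{+}$ I would factor $(-1)^n\det(2B)=D_B\,\mathfrak{f}_B^{2}$ into a fundamental discriminant $D_B$ and a conductor $\mathfrak{f}_B$, and define the candidate Fourier coefficients by
\[
A(B)=c(|D_B|)\,\mathfrak{f}_B^{\,k-1/2}\prod_{p}\tilde F_p(B;\alpha_p),
\]
where $\tilde F_p(B;X)$ is the normalized local Siegel series of $B$, i.e. the Laurent polynomial in $X$ obtained by removing the elementary $\gamma_p$-factor from the Siegel series $b_p(B,X)$. The exponents are forced by the requirement that the resulting series transform with weight $k+n$, and $A(B)$ depends only on the $\mathrm{GL}_{2n}(\mathbb{Z})$-equivalence class of $B$ by construction.

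The heart of the argument is to prove that $F_f(Z)=\sum_{B}A(B)e^{2\pi i\,\mathrm{tr}(BZ)}$ lies in $S_{k+n}(\Gamma_{2n})$. Holomorphy and the growth estimate follow from $|\alpha_p|=1$ together with Kitaoka-type bounds on Siegel series, so the genuine issue is invariance under $\Gamma_{2n}$. The plan here is to reduce modularity to an identity among the Fourier--Jacobi coefficients: writing $F_f=\sum_{m\ge 1}\phi_m$, I would show each $\phi_m$ is a Jacobi cusp form and that the whole family is produced from $\phi_1$ --- the Jacobi form attached to $g$ under the Eichler--Zagier isomorphism --- by index-raising operators, i.e. that the $A(B)$ obey the generalized Maass relations. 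The motivating fact is that the Siegel--Eisenstein series of weight $k+n$ and degree $2n$ has Fourier coefficients given by products of Siegel series specialized at a fixed $X$, so $F_f$ is a deformation of that Eisenstein series along $\alpha_p$; the Maass relations then follow from the local functional equation $\tilde F_p(B;X^{-1})=\tilde F_p(B;X)$ supplied by Katsurada's explicit formula, combined with the Hecke relations satisfied by the $c(D)$. Establishing this compatibility between the local Siegel-series recursion and the half-integral Hecke action is the main obstacle and carries the bulk of the technical weight.

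Once modularity is in hand, the eigenform property and the $L$-function follow from a purely local computation. I would compute the action of the generators of the local Hecke algebra at $p$ on the coefficients $A(B)$; since that action is again governed by the polynomials $\tilde F_p$, the standard local factor of $F_f$ at $p$ comes out to equal $(1-p^{-s})^{-1}$ times the product over $1\le i\le 2n$ of the Hecke local factors of $f$ shifted by $k+n-i$, the total degree $4n+1$ matching that of the standard $L$-function on $\mathrm{Sp}_{4n}$. Assembling these factors over all $p$ yields
\[
L(s,F_f,\mathrm{std})=\zeta(s)\prod_{i=1}^{2n}L(s+k+n-i,\,\mathrm{spin},\,f),
\]
and cuspidality ($A(B)=0$ unless $B>0$) is automatic because $c(|D_B|)$ is a coefficient of a cusp form, which completes the argument.
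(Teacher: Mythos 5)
Note first that the paper does not prove this statement at all: it is quoted as Ikeda's theorem (Theorem $3.2$ of \cite{MR1884618}) and used as a black box, so there is no internal proof to compare against; the only fair benchmark is Ikeda's published argument. Measured against that, your setup is faithful: the passage to the Kohnen plus space, the factorization $(-1)^n\det(2B)=D_B\,\mathfrak{f}_B^{2}$ (and this is exactly where the hypothesis $n \equiv k \ (mod \ 2)$ enters, so that the condition $(-1)^nD_B>0$ matches the plus-space condition $(-1)^kD>0$ and $c(|D_B|)$ is a legitimate index), the formula $A(B)=c(|D_B|)\,\mathfrak{f}_B^{k-1/2}\prod_p\tilde F_p(B;\alpha_p)$, and the role of the functional equation $\tilde F_p(B;X^{-1})=\tilde F_p(B;X)$ from Katsurada's explicit formula are precisely Ikeda's definitions and his key local input, and the degree-$(4n+1)$ shape of the standard factor you compute is correct.

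As a proof, however, your central step is a genuine gap. You reduce modularity to the generalized Maass relations and assert that these ``follow from the local functional equation \dots combined with the Hecke relations satisfied by the $c(D)$.'' That implication is not a routine verification; it carries essentially the full difficulty of the theorem, and historically it was not Ikeda's route: the Maass-relation description of the lifted space was established only later (by Kohnen, and Kohnen--Kojima) as a characterization of the image, not as the engine of the construction. Ikeda proves modularity directly, via the Fourier--Jacobi expansion and theta decomposition, expressing the Fourier--Jacobi coefficients of the candidate series through $g$ and theta series, with the functional equation of $\tilde F_p$ and the comparison with the degree-$2n$ Siegel--Eisenstein series (whose coefficients are the analogous products with Cohen's function in place of $c(|D_B|)$ and $\alpha_p$ specialized to a power of $p$) supplying the required invariance. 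So your outline assembles all the correct ingredients but leaves the one load-bearing wall --- the deduction of the Fourier--Jacobi/Maass compatibility from the local data --- unproved; by contrast, the steps you treat afterwards (the Hecke eigenproperty via the action on the $\tilde F_p$, cuspidality, and the standard $L$-function identity) are indeed comparatively routine once modularity is in hand, though cuspidality follows from the support of the Fourier expansion on positive definite $B$ rather than from cuspidality of $g$ as you state.
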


We omit the details of construction of $A(B)$, check introduction of \cite{MR1884618} for more details. Let $F_f \in S_{k+n}(\Gamma_{2n})$ be the Ikeda lift of $f \in S_{2k}(\Gamma_1)$. To compute the spin L-functions of Ikeda lifts one has to move to the adelic language of modular forms. For each Hecke eigenform $F \in S_k(\Gamma_n)$ there is an associated automorphic representation $\Pi_F$ of $G(\mathbb{A})$, where $\mathbb{A}$ is ring of adeles of $\mathbb{Q}$. See Section $4$ of \cite{MR1821182} for details. Say $\Pi_{F_f}$, $\pi_f$ are automorphic representations corresponding to $F_f$ and $f$ respectively. Let $a,a^{-1}$ be the Satake-p-parameters for $\pi_f$. By Prop $2.2.1$ of \cite{MR1993950}, the spin L-function of $\Pi_{F_f}$ at a non archimedian place $p$ is given by 
\begin{equation} L_p(s, \Pi_{F_f}, spin ) = \prod_{j = 0}^n \prod_{\substack{r = j(j-2n) \\ Step2}}^{j(2n-j)} L_p( s+\frac{r}{2}, \pi_f , Sym^{n-j})^{\beta(r,j,n)} \end{equation} where 
$$ L_p(s, \pi_f , Sym^n) = \prod_{i=0}^n (1-a^i a^{-(n-i)}p^{-s})^{-1}, $$ $\beta(r,j,n) = \alpha(r,j,n)-\alpha(r,j-2,n)$ and $\alpha(r,j,n)$ is the number of ways to pick $j$ numbers from the set $\{1-2n,3-2n,...,-1,1,..., 2n-3,2n-1 \}$ such that they add up to $r$. The relation between $L(s,F_f,spin)$ and $L(s,\Pi_{F_f},spin)$ is explained by Lemma $10$ of \cite{MR1821182}. In the case of Ikeda lifts $F_f \in S_{k+n}(\Gamma_{2n})$, if 
$$L_p(s,\Pi_{F_f},spin)^{-1} = (1-a_0p^{-s}) \prod_{j=1}^n \prod_{1 \leq \delta_1 < \delta_2 <..< \delta_j \leq n} (1-a_0a_{\delta_1}...a_{\delta_j}p^{-s}) $$ 

then 
$$ L_p(s,F_f,spin)^{-1} = (1-p^{nk-\frac{n}{2}}a_0p^{-s}) \prod_{j=1}^n \prod_{1 \leq \delta_1 < \delta_2 <..< \delta_j \leq n} (1-p^{nk-\frac{n}{2}}a_0a_{\delta_1}...a_{\delta_j}p^{-s}).$$

\begin{theorem} For $n \equiv k(mod \ 2)$, let $F_f \in S_{k+n}(\Gamma_{2n})$ be the Ikeda lift of $f \in S_{2k}(\Gamma_1)$. For all large enough primes $p$, $\lambda_{F_f}(p) \geq 0$. \end{theorem}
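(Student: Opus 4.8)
The plan is to read off $\lambda_{F_f}(p)$ directly from the factored spinor Euler factor supplied above and then show that a single explicit term dominates. As recalled in the introduction (Andrianov, \cite{MR0432552}) and exhibited by the genus-two prototype $Q_{F,p}(p^{-s}) = 1-\lambda_F(p)p^{-s}+\cdots$, the eigenvalue $\lambda_{F_f}(p)$ is the coefficient of $p^{-s}$ in the spinor $L$-function $L_p(s,F_f,\mathrm{spin}) = Q_{F_f,p}(p^{-s})^{-1}$, which is exactly the sum of the classical spinor Satake parameters. Expanding the product displayed for $Q_{F_f,p}(p^{-s})$ and collecting this linear term gives
\[
\lambda_{F_f}(p) = p^{nk-\frac n2}\,a_0\Big(1+\sum_{j=1}^{2n}\sum_{1\le\delta_1<\cdots<\delta_j\le 2n}a_{\delta_1}\cdots a_{\delta_j}\Big) = p^{nk-\frac n2}\,a_0\prod_{i=1}^{2n}(1+a_i) =: p^{nk-\frac n2}\,S,
\]
where $a_0,a_1,\dots,a_{2n}$ are the spinor Satake parameters of $\Pi_{F_f}$. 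Since $nk-\tfrac n2 = n(k-\tfrac12)>0$, the sign of $\lambda_{F_f}(p)$ is the sign of the real number $S$.

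Next I would compute $S$ from the symmetric-power decomposition (2). The inverse roots of $L_p\!\left(s+\tfrac r2,\pi_f,\mathrm{Sym}^{m}\right)$ are $a^{\,m-2i}p^{-r/2}$ for $0\le i\le m$, each of absolute value $p^{-r/2}$ because $|a|=1$ by Deligne's bound for $f$. Thus $S$ is a sum, weighted by the nonnegative integers $\beta(r,j,n)$, of terms $a^{\,m-2i}p^{-r/2}$ with $m=n-j$ and $j(j-2n)\le r\le j(2n-j)$. The largest power of $p$ comes from the most negative admissible $r$; since $j(j-2n)=-n^2+(j-n)^2$ attains its minimum only at $j=n$, the extreme value $r=-n^2$ is realised solely by the trivial factor $\mathrm{Sym}^0$ $(j=n)$, which contributes the single parameter $p^{n^2/2}$ with multiplicity $\beta(-n^2,n,n)=1$. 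Every other spinor parameter has $r\ge -n^2+1$, hence absolute value at most $p^{(n^2-1)/2}$, so $S = p^{n^2/2}+R$ with $|R|\le 2^{2n}p^{(n^2-1)/2}$.

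Consequently $S\ge p^{(n^2-1)/2}\bigl(p^{1/2}-2^{2n}\bigr)>0$ whenever $p>2^{4n}$, and taking any $C\ge 2^{4n}$ yields $\lambda_{F_f}(p)=p^{nk-n/2}S>0$ for all primes $p>C$, proving the theorem. I expect the main obstacle to be the combinatorial bookkeeping in the previous step: one must confirm both that the top power $p^{n^2/2}$ is attained uniquely and that its coefficient is the positive integer $1$, since only then is the oscillating, $a$-dependent remainder --- controlled purely by $|a|=1$ --- provably dominated. Finally I would record a sharper form: grouping the factors of $a_0\prod_i(1+a_i)$ into conjugate pairs and using $a=e^{i\theta_p}$ with $a_0=a^{-n}$ rewrites $S = \prod_{i=1}^{n}\bigl(2\cos\theta_p + p^{\,n+\frac12-i}+p^{-(n+\frac12-i)}\bigr)$, each factor being strictly positive for every prime; this upgrades the conclusion to $\lambda_{F_f}(p)>0$ for all $p$, consistent with Breulmann's genus-two theorem.
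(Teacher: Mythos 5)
Your main argument is essentially the paper's own proof: extract $\lambda_{F_f}(p)$ as $p^{nk-\frac{n}{2}}$ times the sum of the spinor Satake parameters, read those parameters off Schmidt's decomposition (2), observe that the unique largest one is $p^{n^2/2}$ (coming only from $j=n$, $r=-n^2$, with $\beta(-n^2,n,n)=1$ since $j(j-2n)=(j-n)^2-n^2$ is minimized only at $j=n$), and use $|a|=1$ to dominate the remaining $2^{2n}-1$ parameters, each of modulus at most $p^{(n^2-1)/2}$. Your version is slightly sharper in that it records the explicit threshold $p>2^{4n}$ where the paper settles for ``large enough $p$'' with an unquantified bound on the $c_r$. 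The genuinely new content is your closing remark: the factorization $S=\prod_{i=1}^{n}\bigl(2\cos\theta_p+p^{\,n+\frac12-i}+p^{-(n+\frac12-i)}\bigr)$, each factor being positive since $p^{1/2}+p^{-1/2}>2$, would give $\lambda_{F_f}(p)>0$ for \emph{all} primes and \emph{all} $n$, strengthening both Theorem 2.1 and Proposition 3.3 (which the paper obtains only for $n=2$, by a case-by-case quadratic-in-$u$ analysis at $p=2,3,5$); I checked that your product reproduces the formula of Lemma 3.2 when $n=2$. To promote that remark to a proof you would need to justify the identification $a_0=a^{-n}$, $a_i=a\,p^{\pm(n+\frac12-i)}$ of the classical Satake parameters of the Ikeda lift (e.g.\ from (2) or from Ikeda's standard $L$-function, taking care with the sign of $a_0$ and the normalization $a_0^2\prod_i a_i=1$); as written it is an assertion, not yet an argument, and you should also either prove or cite the nonnegativity of the multiplicities $\beta(r,j,n)$, which the paper never uses and which your phrase ``weighted by the nonnegative integers $\beta$'' quietly assumes.
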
 
\begin{proof} 
By the statement of Theorem $1.3.2$ of \cite{MR0432552} the coefficient of $x$ in $Q_{F_f,p}(x)$ is $-\lambda_{F_f}(p)$. Using Schmidt's formula for L-function and Andrianov's result we can compute $\lambda_{F_f}(p)$ for an Ikeda lift.
In case of Ikeda lifts, $-p^{\frac{n}{2}-nk}\lambda_{F_f}(p)$ would be the coefficient of $p^{-s}$ in 
$$ L_p(s, \Pi_{F_f}, spin )^{-1} = \prod_{j = 0}^n \prod_{\substack{r = j(j-2n) \\ Step2}}^{j(2n-j)} L_p( s+\frac{r}{2}, \pi_f , Sym^{n-j})^{-\beta(r,j,n)}$$

For a fixed $j$, let us compute the coefficient of $p^{-s}$ in $\displaystyle{ L_p( s+\frac{r}{2}, \pi_f , Sym^{n-j})^{-1}}$ $$ L_p(s+\frac{r}{2},\pi_f,Sym^{n-j})^{-1} = \prod_{i=0}^{n-j}(1 - a^i (a^{-1})^{n-j-i} p ^{-(s+\frac{r}{2})})=\prod_{i=0}^{n-j}(1-a^{2i+j-n}p^{\frac{-r}{2}}p^{-s}).$$

Coefficient of $p^{-s}$ is 
$$ -\sum_{i=0}^{n-j} a^{2i+j-n}p^{-\frac{r}{2}}.$$

Hence the coefficient of $p^{-s}$ in $\displaystyle{ \prod_{j=0}^n \prod_{\substack{r = j(j-2n) \\ step 2}}^{j(2n-j)}L_p( s+\frac{r}{2}, \pi_f , Sym^{n-j})^{-\beta(r,j,n)}}$ is 
$$-\sum_{j=0}^n \Bigg( \sum_{i=0}^{n-j} a^{2i+j-n}\Bigg( \sum_{\substack{r = j(j-2n) \\ step2}}^{j(2n-j)} p^{\frac{-r}{2}} \beta(r,j,n) \Bigg) \Bigg)$$ 
\begin{equation} \Rightarrow \lambda_{F_f}(p) = p^{nk-\frac{n}{2}}\sum_{j=0}^n \Bigg( \sum_{i=0}^{n-j} a^{2i+j-n}\Bigg( \sum_{\substack{r = j(j-2n) \\ step2}}^{j(2n-j)} p^{\frac{-r}{2}} \beta(r,j,n) \Bigg) \Bigg) \end{equation} 

Since the above expression is a polynomial in $p^{\pm \frac{1}{2}}$, for a large enough $p$ we can determine the sign by calculating the leading term, i.e, the term with highest power of $p$. In the expression $(3)$, the highest power of $p$ appears when $r$ is the smallest. Every $r$ is of the form $j^2-2nj+b$ for some non negative integer $b$. Smallest value of $j^2-2nj+b$ is when $j=n$ and $b=0$. The leading term is $\beta(-n^2,n,n)p^{\frac{n^2}{2}}.$
$$ \beta(-n^2,n,n) = \alpha(-n^2,n,n)-\alpha(-n^2,n-2,n),$$

and the only way to pick $r$ numbers from $\{1-2n,3-2n,...,-1,1,..., 2n-3,2n-1 \}$ to add up to $-n^2$ is to pick all $n$ negative numbers and no positive numbers. We can conclude that $\alpha(-n^2,n,n) =1$, $\alpha(-n^2,n-2,n)=0$ and $\beta(-n^2,n,n)=1$.
$$ \Rightarrow \lambda_{F_f}(p) =  p^{\frac{n}{2}-nk} \big( p^{\frac{n^2}{2}} + \sum_{r=-n^2}^{n^2-1}c_r p^{\frac{r}{2}}\big) $$ 

For a fixed $r$ let us compute the coefficient $c_r$. From equation $(3)$ it is clear that $c_r = \sum_{j=0}^n c_{j,r}$, where $c_{j,r}$ is the coefficient of $p^{\frac{r}{2}}$ in 
$$ \sum_{i=0}^{n-j} a^{2i+j-n}\Bigg( \sum_{\substack{r = j(j-2n) \\ step2}}^{j(2n-j)} p^{\frac{-r}{2}} \beta(r,j,n) \Bigg)$$ 
$$ \Rightarrow c_{j,r} = \sum_{i=0}^{n-j} a^{2i+j-n} \beta(-r,j,n). $$
$$ \Rightarrow c_r = \sum_{j=0}^{n} c_{j,r} = \sum_{j=0}^{n} \bigg( \sum_{i=0}^{n-j} a^{2i+j-n} \beta(-r,j,n) \bigg) $$

Since $\lvert a \rvert \leq 1$ and $\beta(r,j,n)$ are bounded, we conclude that $c_r$ are bounded and the bound is independent of $p$. Hence for a large enough prime  
$$  p^{\frac{n^2}{2}} > \lvert \sum_{r=-n^2}^{n^2-1}c_r p^{\frac{r}{2}} \rvert . $$

This implies that $\lambda_{F_f}(p)$ is positive for all large enough primes $p$.

\end{proof}

\section{$\lambda_{F}(p^r)$ for Ikeda lifts of genus $4$}

In this section, using equation $(1)$ we compute $\lambda_{F_f}(p^r)$ for Ikeda lifts of genus $4$. Vankov calculated the polynomials $P_{F,p}(x)$ and $Q_{F,p}(x)$ as polynomials over the Hecke algebra. Hecke algebra, $\mathcal{H}_p$, over $G(\mathbb{Q})$ is the ring generated by the following Hecke operators
 
$$ T_i(p^2) = T(diag(\underbrace{1,\dots,1}_{n-i},\underbrace{p,\dots,p}_{i},\underbrace{p^2,\dots,p^2}_{n-i},\underbrace{p,\dots,p}_{i})) \ \text{for} \ 1 \leq i \leq n$$ 
$$\text{and} \  T(p) = T(diag(\underbrace{1,\dots,1}_n,\underbrace{p,\dots,p}_n)) .$$

Andrianov, in \cite{MR0241365}, proved that there exists polynomials $P_p(x)$ and $Q_p(x)$ in $\mathcal{H}_p[x]$ such that 
\begin{equation} \sum_{r = 0}^\infty T(p^r) x^r = \frac{P_p(x)}{Q_p(x)} . \end{equation}   

Vankov, in \cite{MR2780630}, calculated these polynomials explicitly for genus $4$. $P_{F_f,p}(x)$ and $Q_{F_f,p}(x)$ can be retrieved from $P_p(x)$ and $Q_p(x)$ by substituting the operators appearing in the coefficients with their respective eigenvalues of $F_f$. For example, in case of genus $1$. $$Q_p(x) = 1 - T(p)x + p^{k-1}x^2 \  \text{and} \ Q_{f,p}(x) = 1 - \lambda_f(p)x+p^{k-1}x^2 $$ 
$$ \Rightarrow L_p(s,f,spin) = (1-\lambda_f(p)p^{-s}+p^{k-1-2s})^{-1}.$$

Hence we compute spin L-function for genus $4$ Ikeda lift to obtain $Q_{F_f,p}(x)$, which will give us the eigenvalues for $F_f$ for the Hecke operators $T_1(p^2), \ T_2(p^2), \ T_3(p^2)$ and $T(p)$.

\begin{lemma} If $F_f \in S_{k+2}(\Gamma_4)$ is the Ikeda lift of $f \in S_{2k}(\Gamma_1)$ and the Satake parameters for $f$ are $ap^{\frac{k-1}{2}}$ and $a^{-1}p^{\frac{k-1}{2}}$ then
$$ Q_{F_f,p}(x) =  (1- a p^{2k-1}p^{\frac{1}{2}}x)(1-a^{-1}p^{2k-1} p^{\frac{1}{2}}x)(1- ap^{2k-1} p^{\frac{-1}{2}}x)(1-a^{-1}p^{2k-1} p^{\frac{-1}{2}}x)$$ $$(1- a p^{2k-1}p^{\frac{3}{2}}x)(1-a^{-1} p^{2k-1}p^{\frac{3}{2}}x)(1- a p^{2k-1}p^{\frac{-3}{2}}x)(1-a^{-1} p^{2k-1}p^{\frac{-3}{2}}x)(1- p^{2k-1}p^2x)(1- p^{2k-1}px)$$ 
$$(1- a^2 p^{2k-1}x)(1-a^{-2}p^{2k-1} x)(1-p^{2k-1} p^{-1}x)(1- p^{2k-1}p^{-2}x)(1-p^{2k-1}x)^2.$$ \end{lemma}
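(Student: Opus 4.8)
The plan is to read off $Q_{F_f,p}(x)$ directly from Schmidt's formula $(2)$, using the fact recorded above that $Q_{F_f,p}(p^{-s}) = L_p(s,F_f,\mathrm{spin})^{-1}$. Since the Siegel form here has genus $4 = 2n$, I would apply $(2)$ with $n=2$, so the symmetric-power index $n-j$ runs through $2,1,0$ as $j$ runs through $0,1,2$, and the underlying set for the combinatorial count is $\{-3,-1,1,3\}$. Thus the whole computation reduces to three mechanical steps: enumerate the triple product over $j$ and the admissible $r$; expand each local symmetric-power factor into linear factors in the single Satake parameter pair $a,a^{-1}$; and apply the normalization $p^{nk-n/2}=p^{2k-1}$ that converts $L_p(s,\Pi_{F_f},\mathrm{spin})^{-1}$ into $L_p(s,F_f,\mathrm{spin})^{-1}$.

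First I would tabulate the exponents $\beta(r,j,2)$. For $j=0$ only $r=0$ occurs, with $\beta(0,0,2)=1$. For $j=1$ the index $r$ runs over $-3,-1,1,3$, and since each of these is attained by exactly one element of $\{-3,-1,1,3\}$ while $\alpha(r,-1,2)=0$, one gets $\beta(r,1,2)=1$ in each case. For $j=2$ the index $r$ runs over $-4,-2,0,2,4$; here the two-element subsets of $\{-3,-1,1,3\}$ give $\alpha(\pm4,2,2)=\alpha(\pm2,2,2)=1$ and $\alpha(0,2,2)=2$, and after subtracting $\alpha(r,0,2)$ (which equals $1$ only at $r=0$) every $\beta(r,2,2)$ equals $1$ as well. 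Hence each surviving local factor enters to the first power.

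Next I would expand the factors using $L_p(s,\pi_f,\mathrm{Sym}^m)=\prod_{i=0}^m(1-a^{2i-m}p^{-s})^{-1}$, reading off the roots $a^{2i-m}p^{-r/2}$ of $L_p(s+\tfrac r2,\pi_f,\mathrm{Sym}^{m})^{-1}$. The $j=2$ ($\mathrm{Sym}^0$) block contributes the five scalar roots $p^{2},p,1,p^{-1},p^{-2}$; the $j=1$ ($\mathrm{Sym}^1$) block contributes the eight roots $a^{\pm1}p^{\pm3/2}$ and $a^{\pm1}p^{\pm1/2}$; and the $j=0$ ($\mathrm{Sym}^2$) block contributes $a^{\pm2}$ together with a further $1$. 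Multiplying each root by $p^{2k-1}$ and setting $x=p^{-s}$ then reproduces the sixteen claimed linear factors, the only coincidence being that the scalar root $1$ occurs once from the $\mathrm{Sym}^0$ block and once from the $\mathrm{Sym}^2$ block, which accounts for the square $(1-p^{2k-1}x)^2$.

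The computation is entirely routine; the one place demanding care is the bookkeeping of the multiplicities $\beta(r,j,2)$, in particular verifying that the doubled subset-count $\alpha(0,2,2)=2$ is cut back to $\beta(0,2,2)=1$ by the $\alpha(0,0,2)=1$ term, so that no factor is inadvertently counted with the wrong multiplicity. Once this and the half-integer shifts $p^{-r/2}$ are tracked correctly, matching the resulting product against the stated sixteen factors is immediate.
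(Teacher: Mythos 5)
Your proposal is correct and follows essentially the same route as the paper: apply Schmidt's formula $(2)$ with $n=2$, verify that every $\beta(r,j,2)$ equals $1$, expand the symmetric-power factors into linear factors in $a^{\pm1}$ and $p^{\pm r/2}$, and rescale by $p^{2k-1}$ to pass from $L_p(s,\Pi_{F_f},\mathrm{spin})$ to $L_p(s,F_f,\mathrm{spin})$. The only (welcome) difference is that you compute the counts $\alpha(r,j,2)$ explicitly, including the cancellation $\beta(0,2,2)=\alpha(0,2,2)-\alpha(0,0,2)=2-1=1$, where the paper simply cites the tabulated values.
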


\begin{proof} Since $Q_{F_f,p}(p^{-s})^{-1} = L_p(s,F_f,spin)$, we use $(2)$ to calculate the spin L-function of $F$ and by extension $Q_{F_f,p}(p^{-s})$. In the case of Ikeda lift $F_f$ of genus $4$ we substitute $n=2$ in $(2)$. We get three sets of values for $\beta(r,j,2)$ depending on $j$. These can be found in appendix of \cite{MR1993950} or can be calculated using the definition of $\alpha(r,j,n)$ for small values of $n$. Let us compute $L_p(s,\pi_f,Sym^j)^{\beta(r,jn)}$ for each $j$.

\begin{enumerate} 

\item For $ j =0$, $r=0$ and $\beta(0,0,2) = 1.$ 

$$ L_p(s,\pi_f, Sym^2) = \Big(\ (1 - a^2 p^{-s})(1 - a^{-2} p^{-s})(1 - p^{-s}) \Big)\ ^{-1} $$

\item For $j=1$, $r= -3,-1,1,3$ and $ \beta(3,1,2)=\beta(-3,1,2) =\beta(1,1,2)= \beta(-1,1,2) = 1$ 
$$  L_p(s+\frac{-3}{2}, \pi_f, Sym^1)L_p(s+\frac{-1}{2}, \pi_f, Sym^1)L_p(s+\frac{1}{2}, \pi_f, Sym^1)L_p(s+\frac{3}{2}, \pi_f, Sym^1)  = $$
$$ \Big((1- a p^{\frac{3}{2} -s})(1-a^{-1} p^{\frac{3}{2}-s})(1- a p^{\frac{1}{2} -s})(1-a^{-1} p^{\frac{1}{2}-s})(1- a p^{\frac{-1}{2} -s})(1-a^{-1} p^{\frac{-1}{2}-s})$$ $$(1- a p^{\frac{-3}{2} -s})(1-a^{-1} p^{\frac{-3}{2}-s})\Big)^{-1}$$

\item For $j=2$, $r=-4,-2,0,2,4$ and $\beta(r,j,2) = 1$ for all $r$. 
$$ L_p(s-2,\pi_f, Sym^0)L_p(s-1,\pi_f, Sym^0)L_p(s,\pi_f, Sym^0)L_p(s+1,\pi_f, Sym^0)L_p(s+2,\pi_f, Sym^0)$$ 
$$= \Big((1- p^{-s+2})(1- p^{-s+1})(1- p^{-s})(1- p^{-s-1})(1- p^{-s-2})\Big)^{-1}$$
\end{enumerate} 
Combining all the cases we get $L_p(s,\Pi_{F_f},spin)$ as

$$\Big( (1 - a^2 p^{-s}) (1 - a^{-2} p^{-s})(1 - p^{-s})(1- a p^{\frac{3}{2} -s})(1-a^{-1} p^{\frac{3}{2}-s})$$
$$(1- a p^{\frac{1}{2} -s})(1-a^{-1} p^{\frac{1}{2}-s})(1- a p^{\frac{-1}{2} -s})(1-a^{-1} p^{\frac{-1}{2}-s})(1- a p^{\frac{-3}{2} -s})$$ 
$$(1-a^{-1} p^{\frac{-3}{2}-s})(1- p^{-s+2}) (1- p^{-s+1})(1- p^{-s})(1- p^{-s-1})(1- p^{-s-2})\Big)^{-1}$$

Replacing $p^{-s}$ in $L_p(s,\Pi_{F_f},spin)$ with $p^{2k-1} p^{-s}$ we get $L_p(s,F_f,spin)$ and hence the formula of $Q_{F_f,p}(x)$ as in the statement of the lemma. \end{proof} 

Coefficients of the polynomial $P_p(x)$ computed by Vankov in \cite{MR2780630} contain $T_1(p^2), \ T_2(p^2), \ T_3(p^2)$ and $T(p)$. To retrieve $P_{F_f,p}(x)$ from $P_p(x)$ we need eigenvalues of the aforementioned operators. Say $\lambda_{F_f}(T_i(p^2))$ is the eigenvalue for $F$ with respect to $T_i(p^2).$ 
\begin{lemma} The eigenvalues $\lambda_{F_f}(p)$ and $\lambda_{F_f}(T_i(p^2))$ for $i=1,2,3$ are 

$$ \lambda_{F_f}(T_3(p^2)) = \frac{\left(p^3+p^2+p+1\right) \left(a^2 p^{5/2}+a (p-1)+p^{5/2}\right) p^{4 (k-3)}}{a}$$ 
$$ \lambda_{F_f}(T_2(p^2)) = p^{4k-2}+ (a+\frac{1}{a})(p^{4k-\frac{7}{2}}+p^{4k-\frac{9}{2}}+p^{4k-\frac{11}{2}}-p^{4k-\frac{15}{2}}-p^{4k-\frac{17}{2}}-p^{4k-\frac{19}{2}})$$
$$ (a+\frac{1}{a})^2(p^{4k-3}+2p^{4k-4}+p^{4k-6}+p^{4k-7}) +p^{4k-4}(a^2+\frac{1}{a^2}+3) -p^{4k-8}-p^{4k-10}$$
$$ \lambda_{F_f}(T_1(p^2)) = p^{4k}+p^{4k-10}+p^{4k-4} + (a+\frac{1}{a})(p^{4k+\frac{1}{2}}+2p^{4k-\frac{1}{2}}-p^{4k-\frac{11}{2}}-p^{4k-\frac{13}{2}}-p^{4k-\frac{15}{2}})$$
$$+(a+\frac{1}{a})^2(p^{4k-1}+p^{4k-2}+p^{4k-3}-p^{4k-5}) -p^{4k-6}-p^{4k-7})(a^2+\frac{1}{a^2}+3) -p^{4k-8}-p^{4k-10}$$
$$+p^{4k-\frac{7}{2}}(a+\frac{1}{a})^3-p^{4k-6}(a^2+3+\frac{1}{a^2})+(p^{4k-\frac{5}{2}}+p^{4k-\frac{3}{2}})(a^3+4a+4\frac{1}{a}+\frac{1}{a^3})$$ 
$$ \lambda_{F_f}(p) = p^{2k-1}(a^2+\frac{1}{a^2}+\frac{a \left(p^3+p^2+p+1\right)}{p^{3/2}}+\frac{p^3+p^2+p+1}{a p^{3/2}}+p^2+\frac{1}{p^2}+p+\frac{1}{p}+2)$$ \end{lemma}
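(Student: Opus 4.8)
The plan is to extract the four eigenvalues by comparing the Hecke-algebra-valued polynomial $Q_p(x)$ of (4) with its scalar specialization $Q_{F_f,p}(x)$ computed in the preceding lemma. Writing $Q_p(x)=\sum_j c_j\,x^j\in\mathcal{H}_p[x]$, each coefficient $c_j$ is, by Vankov's computation in \cite{MR2780630}, an explicit polynomial in the generators $T(p),T_1(p^2),T_2(p^2),T_3(p^2)$ and in scalar powers of $p$. Applying the eigenvalue homomorphism attached to $F_f$ turns every such coefficient into a scalar identity
$$ [x^j]\,Q_{F_f,p}(x)=c_j\big(\lambda_{F_f}(p),\lambda_{F_f}(T_1(p^2)),\lambda_{F_f}(T_2(p^2)),\lambda_{F_f}(T_3(p^2))\big), $$
whose left-hand side is obtained by expanding the sixteen-fold product of the preceding lemma. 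So the whole task reduces to expanding that product, reading off its low-order coefficients, and inverting this system for the four unknowns.

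First I would pin down $\lambda_{F_f}(p)$, which requires nothing beyond the preceding lemma: by Theorem $1.3.2$ of \cite{MR0432552}, the coefficient of $x$ in $Q_{F_f,p}(x)$ is $-\lambda_{F_f}(p)$, so $\lambda_{F_f}(p)$ is just the sum of the sixteen spinor roots. Writing each root as $p^{2k-1}$ times a monomial in $a^{\pm1},p^{\pm1/2}$ and grouping the $a+a^{-1}$ terms with $p^{3/2}+p^{1/2}+p^{-1/2}+p^{-3/2}=p^{-3/2}(p^3+p^2+p+1)$ gives the stated
$$ \lambda_{F_f}(p)=p^{2k-1}\Big(a^2+a^{-2}+(a+a^{-1})\tfrac{p^3+p^2+p+1}{p^{3/2}}+p^2+p^{-2}+p+p^{-1}+2\Big). $$

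Next I would feed this value into the identities coming from the next few coefficients of $Q_p(x)$. The fourth generator $T_4(p^2)=T(p\,1_8)$ is the central operator $F\mapsto F|_k(p\,1_8)$ and acts by a fixed power of $p$ determined by the weight, so it never enters as an unknown; only $T_1(p^2),T_2(p^2),T_3(p^2)$ remain. By Vankov's formulas the coefficients $[x^2],[x^3],[x^4]$ (those in which these three generators first occur) express each $T_i(p^2)$ modulo known polynomials in $T(p)$ and scalars, so substituting the already-determined $\lambda_{F_f}(p)$ and matching against the expanded scalar coefficients of $Q_{F_f,p}(x)$ yields an essentially triangular system; solving it produces the closed forms for $\lambda_{F_f}(T_3(p^2))$, $\lambda_{F_f}(T_2(p^2))$ and $\lambda_{F_f}(T_1(p^2))$ in the statement.

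The hard part will not be conceptual but computational: one must transcribe Vankov's genus-$4$ coefficient polynomials without error, expand a degree-$16$ product symbolically, and carry the normalization shift $p^{-s}\mapsto p^{2k-1}p^{-s}$ (equivalently the unitary-versus-arithmetic convention for the Satake parameter $a$ of $f$) consistently through every coefficient. An equivalent route, of comparable difficulty, would be to first read the five Satake parameters $a_{0,p},\dots,a_{4,p}$ of $F_f$ off the sixteen factors of $Q_{F_f,p}(x)$, using the relation $a_{0,p}^2a_{1,p}a_{2,p}a_{3,p}a_{4,p}=p^{4k-2}$ to fix the labelling, and then evaluate the classical symmetric-function images of the $T_i(p^2)$ under the Satake isomorphism.
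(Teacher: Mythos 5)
Your proposal is correct and takes essentially the same route as the paper, whose entire proof is the one-line observation that the eigenvalues can be computed by comparing the coefficients of Vankov's Hecke-algebra-valued $Q_p(x)$ with those of the explicit sixteen-fold product $Q_{F_f,p}(x)$ from the preceding lemma. Your extra details --- reading $\lambda_{F_f}(p)$ off the linear coefficient as the sum of the spinor roots, noting that $T_4(p^2)=T(p\,1_8)$ is central and hence not an unknown, and solving the resulting triangular system for $\lambda_{F_f}(T_i(p^2))$ --- are faithful elaborations of that same computation.
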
 
\begin{proof} Since coefficients of $Q_p(x)$ also contain all the operators $T_i(p^2)$ and $T(p)$, we can compute the eigenvalues by comparing coefficient of $Q_{F_f,p}(x)$ and $Q_p(x).$ \end{proof} 

Above lemma gives an explicit expression for $\lambda_{F_f}(p)$. Using it we can improve the result of Theorem $2.1$ and prove that $\lambda_{F_f}(p)$ is positive \textbf{for all primes $p$} when $n=4.$

\begin{proposition}If $F_f \in S_{k+2}(\Gamma_4)$ is the Ikeda lift of $f \in S_{2k}(\Gamma_1)$ then $\lambda_{F_f}(p)$ is positive for all primes $p$. \end{proposition}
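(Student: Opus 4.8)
The plan is to read positivity directly off the closed formula for $\lambda_{F_f}(p)$ supplied by Lemma 3.2, treating it as a quadratic in the real quantity $a+a^{-1}$. First I would invoke Deligne's theorem (the Ramanujan--Petersson bound for the holomorphic cusp form $f$): the two Satake parameters of $f$ are complex conjugates of equal absolute value, so in the normalization of Lemma 3.1 one has $|a|=1$, i.e.\ $a=e^{i\theta}$ and $u:=a+a^{-1}=2\cos\theta\in[-2,2]$. This is the one genuinely arithmetic input, and it is stronger than the bound $|a|\le 1$ used for large $p$ in Theorem 2.1; I expect it to be indispensable here, for reasons explained below.

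Next I would rewrite the bracket in Lemma 3.2. Using $a^2+a^{-2}=u^2-2$ and collecting the symmetric combinations, set $S:=p^{3/2}+p^{1/2}+p^{-1/2}+p^{-3/2}$ and $T:=p^2+p+p^{-1}+p^{-2}$, both positive. The term $\tfrac{a(p^3+p^2+p+1)}{p^{3/2}}+\tfrac{p^3+p^2+p+1}{a p^{3/2}}$ becomes $uS$, and the $+2$ absorbs the $-2$, so the formula collapses to
$$\lambda_{F_f}(p)=p^{2k-1}\,h(u),\qquad h(u)=u^2+Su+T.$$
It now suffices to show $h(u)>0$ for every $u\in[-2,2]$ and every prime $p$. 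Since $h$ is an upward parabola with vertex at $u=-S/2$, and since $S>p^{3/2}+p^{1/2}\ge 3\sqrt2>4$ for all $p\ge2$, the vertex lies strictly to the left of $-2$; hence $h$ is increasing on $[-2,2]$ and attains its minimum there at the endpoint $u=-2$, where $h(-2)=T-2S+4$.

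The crux is to prove $T-2S+4>0$ for all primes, and this is where I would introduce the substitution $t:=\sqrt p+1/\sqrt p$. Writing $S$ and $T$ through $w=\sqrt p$ and using $w^2+w^{-2}=t^2-2$, $w^3+w^{-3}=t^3-3t$, $w^4+w^{-4}=t^4-4t^2+2$, the endpoint value collapses to $h(-2)=t^4-2t^3-3t^2+4t+4=(t^2-t-2)^2=(t-2)^2(t+1)^2$. Since $p\ge2$ gives $t=\sqrt p+1/\sqrt p>2$, this is strictly positive, so $h(u)\ge h(-2)>0$ and hence $\lambda_{F_f}(p)>0$ for every prime $p$, as claimed.

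I expect the main obstacle to be conceptual rather than computational: the unconstrained minimum $h(-S/2)=T-S^2/4$ is in fact negative (already at $p=2$ one checks $S^2>4T$), so positivity genuinely fails for real $a$ with $|a|<1$. The argument therefore hinges on confining $u$ to $[-2,2]$, which is exactly the content of the Ramanujan bound, and on the clean factorization of the endpoint value as a perfect square; verifying that factorization together with the elementary inequality $S>4$ are the two steps that require care.
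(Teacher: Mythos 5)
Your proof is correct, and it shares the paper's central idea: reduce Lemma 3.2 to a quadratic $h(u)=u^2+Su+T$ in $u=a+a^{-1}$ and exploit the Ramanujan bound $u\in[-2,2]$ (which the paper uses only implicitly, via ``$u\ge -2$''; you are right that $|a|=1$ rather than $|a|\le 1$ is the needed input, and your observation that $T-S^2/4<0$ at $p=2$ shows the constraint on $u$ is genuinely indispensable). Where you diverge is in the execution. The paper groups the terms as $p^{3/2}(\sqrt p+u)+p^{1/2}(\sqrt p+u)+p^{-3/2}(\sqrt p+u)+p^{-1/2}u+p^{-2}+u^2$ and argues by cases: for $p\ge 5$ it adds two numerically estimated inequalities, and for $p=2,3$ it checks the explicit quadratics $u^2+\tfrac{15}{2\sqrt2}u+\tfrac{27}{4}$ and $u^2+\tfrac{40}{3\sqrt3}u+\tfrac{112}{9}$ by hand. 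You instead locate the vertex at $-S/2<-2$ (since $S>p^{3/2}+p^{1/2}\ge 3\sqrt2>4$), conclude $h$ is increasing on $[-2,2]$, and evaluate the endpoint exactly: with $t=\sqrt p+1/\sqrt p$ one gets $S=t^3-2t$, $T=t^4-3t^2$, and $h(-2)=t^4-2t^3-3t^2+4t+4=(t-2)^2(t+1)^2$, which I have checked. This buys a uniform, case-free argument valid for all primes at once, plus the sharper quantitative conclusion $\lambda_{F_f}(p)\ge p^{2k-1}(t-2)^2(t+1)^2>0$, and it makes transparent that positivity degenerates exactly at the formal value $p=1$ (i.e.\ $t=2$) and at $u=-2$. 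The paper's version requires no algebraic identity but pays for it with the three-way case split and ad hoc decimal estimates.
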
 

\begin{proof} 
The formula of $\lambda_{F_f}(p)$ from Lemma $3.2$ can be rewritten as  
$$ p^{2k-1} \Big(p^{\frac{3}{2}}\Big( \sqrt{p}+a+\frac{1}{a} \Big)+p^{\frac{1}{2}}\Big( \sqrt{p}+a+\frac{1}{a} \Big)+p^{\frac{-3}{2}}\Big( \sqrt{p}+a+\frac{1}{a} \Big) +$$
$$2+p^{\frac{-1}{2}}\Big( a+\frac{1}{a} \Big)+ \frac{1}{p^2} + a^2+\frac{1}{a^2} \Big)$$

 Say $a+\frac{1}{a} = u$, then $\lambda_F(p)$ 
$$= p^{2k-1} \Big(p^{\frac{3}{2}}\Big( \sqrt{p}+u\Big)+p^{\frac{-1}{2}}u+p^{\frac{1}{2}}\Big( \sqrt{p}+u \Big)+p^{\frac{-3}{2}}\Big( \sqrt{p}+u \Big) + \frac{1}{p^2} + u^2 \Big)$$

Considering the expression as a quadratic in $u$, we can easily check when it is positive. 

\begin{enumerate} 

\item Assume $p \geq 5$. Since $ u \geq -2$, $\sqrt{p}+u \geq \sqrt{5}-2 > 0.236$ and $\frac{u}{\sqrt{p}} \geq -\frac{2}{\sqrt{5}} \geq -0.9.$ These inequalities imply that 
$$p^{2k-1}(p^{\frac{3}{2}}(\sqrt{p}+u)+\frac{u}{\sqrt{p}}) \geq p^{2k-1}( 5^{\frac{3}{2}}(0.236) -0.9) > 0$$ and 
$$  p^{2k-1}(p^{\frac{1}{2}}( \sqrt{p}+u)+p^{\frac{-3}{2}}( \sqrt{p}+u ) + \frac{1}{p^2} + u^2)  > 0.$$

Adding the above two inequalities, we conclude that $\lambda_{F_f}(p) \geq 0$ for all $p>5.$ 

\item If $p=3$ then 
$$\lambda_{F_f}(3) =3^{2k-1} (u^2+\frac{40}{3\sqrt{3}}u+\frac{112}{9}) $$ 

\item If $p=2$ then 
$$ \lambda_{F_f}(2)=2^{2k-1}(u^2+\frac{15}{2\sqrt{2}}u+\frac{27}{4}) $$

\end{enumerate}
In cases of $p=2,3$ we get quadratic equations which are positive for $u \geq -2$. This shows $\lambda_{F_f}(p)$ is positive for all primes.
\end{proof}

Substituting the eigenvalues found in Lemma $3.2$ into $P_p(x)$ computed by Vankov, we derive $P_{F_f,p}(x)$. And Lemma $3.1$ calculates $Q_{F_f,p}(x)$. With this information we can prove the following theorem. 

\begin{theorem} Let $F_f$ be a Hecke eigenform in $S_{k+2}(\Gamma_4)$ and the Ikeda lift of $f \in S_{2k}(\Gamma_1)$. For a fixed $r$, $\lambda_{F_f}(p^r)$ is positive for all large enough primes $p$. \end{theorem}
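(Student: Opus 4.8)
The plan is to read off $\lambda_{F_f}(p^r)$ as the coefficient of $x^r$ in the rational function of equation $(1)$, and then to show that for fixed $r$ this coefficient is a Laurent polynomial in $p^{1/2}$ whose sign, for large $p$, is dictated by its leading term --- exactly as in the proof of Theorem $2.1$. Concretely, I would take $Q_{F_f,p}(x)=\prod_{i=1}^{16}(1-b_ix)$ from Lemma $3.1$ and build $P_{F_f,p}(x)$ by substituting the eigenvalues of Lemma $3.2$ into Vankov's polynomial $P_p(x)$. Expanding $Q_{F_f,p}(x)^{-1}=\sum_{m\ge 0}h_m(b_1,\dots,b_{16})\,x^m$, where $h_m$ is the complete homogeneous symmetric function, and writing $P_{F_f,p}(x)=\sum_j\rho_jx^j$, yields $\lambda_{F_f}(p^r)=\sum_j\rho_j\,h_{r-j}(b_1,\dots,b_{16})$.

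Each root has the shape $b_i=p^{2k-1}p^{\epsilon_i}a^{m_i}$ with $\epsilon_i\in\{-2,-3/2,\dots,2\}$ and $m_i\in\{-2,\dots,2\}$, read directly from Lemma $3.1$. Because $a$ is a unitary Satake parameter, $|a|=1$, so every Laurent polynomial in $a$ that occurs (once $r$ is fixed) has values bounded independently of $p$; this is the role played by the bound on $a$ in Theorem $2.1$. Hence $\lambda_{F_f}(p^r)$ is a Laurent polynomial in $p^{1/2}$ with $p$-independent bounded coefficients, and its sign for $p>C_r$ equals the sign of the coefficient of the highest power of $p$.

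To locate that leading term I would pass to partial fractions, $\frac{P_{F_f,p}(x)}{Q_{F_f,p}(x)}=\sum_i\frac{A_i}{1-b_ix}$, treating the double root $b=p^{2k-1}$ separately, so that $\lambda_{F_f}(p^r)=\sum_iA_ib_i^r$ up to a subdominant $r\,(p^{2k-1})^r$ contribution. The decisive observation, mirroring the role of $\beta(-n^2,n,n)=1$ in Theorem $2.1$, is that the largest spin parameter $b_9=p^{2k-1}\cdot p^{2}=p^{2k+1}$ is strictly bigger than every other $b_i$ and carries no factor of $a$ (its $m_9=0$). Thus $b_i/b_9\to 0$ for $i\neq 9$, the residue $A_9=P_{F_f,p}(b_9^{-1})/\prod_{i\neq 9}(1-b_i/b_9)$ has a clean $p\to\infty$ limit equal to $\lim_{p\to\infty}P_{F_f,p}(b_9^{-1})$, and the candidate leading term $A_9\,p^{r(2k+1)}$ is $a$-independent. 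Proving that this term is genuinely dominant, with positive coefficient uniform for $|a|=1$, then gives $\lambda_{F_f}(p^r)>0$ for all $p>C_r$.

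The main obstacle is controlling the numerator $P_{F_f,p}(x)$, and two things must be verified. First, no coefficient $\rho_j$ of Vankov's polynomial --- after the substitutions of Lemma $3.2$, which already reach sizes $p^{4k}$ --- may grow faster than $p^{j(2k+1)}$, for otherwise $\rho_j\,h_{r-j}(b)$ could outweigh $b_9^r=p^{r(2k+1)}$; equivalently, every root $\mu$ of $P_{F_f,p}$ must satisfy $|\mu|\ge p^{-(2k+1)}$. Second, the surviving leading coefficient equals $\lim_{p\to\infty}P_{F_f,p}(b_9^{-1})$, which must be shown to be strictly positive, so that no top-order cancellation between $P_{F_f,p}$ and $Q_{F_f,p}$ annihilates the $p^{r(2k+1)}$ term (the analogous check in the Saito--Kurokawa case of genus $2$ succeeds). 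Both are finite computations with Vankov's explicit polynomial and the exponents $\epsilon_i$ of Lemma $3.1$; granting them, the coefficient of $p^{r(2k+1)}$ is positive and $a$-independent, and the argument closes exactly as in Theorem $2.1$.
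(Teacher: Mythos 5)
Your overall strategy is the same as the paper's: expand $1/Q_{F_f,p}(x)$ by partial fractions over the sixteen roots from Lemma 3.1 (with the double root $p^{2k-1}$ handled separately), build $P_{F_f,p}(x)$ from Vankov's polynomial via the eigenvalues of Lemma 3.2, observe that for fixed $r$ the resulting expression for $\lambda_{F_f}(p^r)$ is a polynomial in $p^{\pm 1/2}$ whose coefficients lie in $\mathbb{Z}[a,a^{-1}]$ and are therefore bounded independently of $p$ (since $|a|=1$), and conclude by isolating a positive leading term. The paper's additional step of clearing denominators against an explicitly positive factor $D(r)$ is cosmetic relative to your residue formulation.

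The gap is that the two verifications you flag at the end are not side conditions --- they are the proof. You ``grant'' (i) that no coefficient $e_j$ of $P_{F_f,p}$ has $p$-degree exceeding $(2k+1)j$, and (ii) that the candidate leading coefficient $\lim_{p\to\infty}P_{F_f,p}(b_9^{-1})$ is strictly positive and independent of $a$. Neither follows from the shape of the roots $b_i=p^{2k-1}p^{\epsilon_i}a^{m_i}$ alone: Vankov's coefficients, after the substitutions of Lemma 3.2, are large Laurent polynomials in $p^{1/2}$ and $a$ (already of size $p^{4k}$ for $j=1$), and if some $e_j$ attains degree exactly $(2k+1)j$ with an $a$-dependent top coefficient, then your limit is a nontrivial function of $a$ whose positivity for all $u=a+a^{-1}\in[-2,2]$ would require a separate argument of the kind carried out in Proposition 3.3 for $r=1$. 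The paper resolves exactly this point by brute force: it records the sixteen residues in the appendix, computes the leading term of the numerator of $D(r)\sum_i e_i g(r-i)$ explicitly as $p^{61+6k+3r+2kr}a^{2r}(a^{25}-2a^{23}+2a^{19}-a^{17})$, and checks that the $a$-part cancels exactly against the residual denominator $a^{2r+17}(a^2-1)^3(a^2+1)$, leaving leading coefficient $1$. Until that (admittedly finite) computation is actually performed, the positivity of the leading term --- which is the entire content of the theorem --- remains an assumption rather than a conclusion, so the proposal is a correct plan but not a complete proof.
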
 
\begin{proof}

To compute $\lambda_{F_f}(p^r)$ we mimic Bruelmann's idea(\cite{MR1719682}) of writing $1/Q_{F_f,p}(x)$ as partial fractions and comparing the powers of $x^r$ on both sides. 

$$ 
\frac{1}{Q_{F_f,p}(x)} = \frac{a_1}{(-1+p^{2k-1}x)^2} +\frac{a_2}{(-1+p^{2k-1}x)}+\frac{a_3}{(-a \sqrt{p}+p^{2k-1}x)}+\frac{a_4}{(-ap^{\frac{3}{2}}+p^{2k-1}x)} $$ 
$$ +\frac{a_5}{(-p^2+p^{2k-1}x)}+\frac{a_6}{(-\sqrt{p}+p^{2k-1}x)}+\frac{a_7}{(-1+a^2p^{2k-1}x)}+\frac{a_8}{(-a+\sqrt{p}p^{2k-1}x)}+$$ 
$$\frac{a_9}{(-1+a\sqrt{p}p^{2k-1}x)}+\frac{a_{10}}{(-1+p p^{2k-1}x)}+\frac{a_{11}}{(-a+p^{\frac{3}{2}}p^{2k-1}x)}+\frac{a_{12}}{(-1+ap^{\frac{3}{2}}p^{2k-1}x)}$$ 
$$+\frac{a_{13}}{(-1+p^2p^{2k-1}x)}+\frac{a_{14}}{(-a^2+p^{2k-1}x)}+\frac{a_{15}}{(-p+p^{2k-1}x)}+\frac{a_{16}}{(-p^{\frac{3}{2}}+ap^{2k-1}x)} $$
Precise values of $a_i's$ are given in appendix. 
Above fractions can be written as power series, for example 
$$ \frac{1}{-a+bx} = -\sum_{r=0}^\infty a^{-(r+1)}b^rx^r .$$

Writing every fraction as a power series and collecting like terms together, we get the coefficient of $x^r$ as  $$p^{(2k-1)r}(a_1(r+1)-a_2-a_3a^{-r-1}p^{-(r+1)/2}-a_4a^{-r-1}p^{-3(r+1)/2}-a_5p^{-2(r+1)}-a_6a^rp^{-(r+1)/2} $$ $$ -a_7a^{2r}-a_8a^{-r-1}p^{r/2}-a_9a^rp^{r/2}-a_{10}p^r-a_{11}a^{-r-1}p^{3r/2}-a_{12}a^rp^{3r/2}-a_{13}p^{2r}-a_{14}a^{-2r-2} $$ $$-a_{15}p^{-(r+1)}-a_{16}a^rp^{-3(r+1)/2}).$$

Let us denote it by $g(r)$. We can rewrite $(1)$ as 	 
$$ \sum_{r=0}^\infty \lambda_{F_f}(p^r) x^r = P_{F_f}(x) \sum_{r=0}^\infty g(r)x^r. $$

Comparing coefficients of $x^r$ on both sides, we have 
\begin{equation} \lambda_F(p^r) = \sum_{i=0}^{14} e_ig(r-i) \end{equation}

where it is understood that $g(r-i)=0$ for $r-i <0$. Here $e_i$ are defined by $\displaystyle{P_{F_f}(x) = \sum_{i=0}^{14} e_ix^i}$.

$\lambda_{F_f}(p^r)$ is expression of the form $f_1(p)r+f_2(p)$ where $f_i$ are rational functions in $p^{\frac{1}{2}}$ with coefficients in $\mathbb{Z}[a,a^{-1}]$. Hence for a fixed $r$, $\lambda_{F_f}(p^r)$ is a rational function in $p^{\frac{1}{2}}$ with coefficients in $\mathbb{Z}[a,a^{-1}]$. The denominator on right side of $(5)$ is 
$$ a^{2 r} p^{6 k+2 r+14}(a^2-1)^3(a^2+1)(p-1)^5(p+1)^3(p^2+1)(p^2+p+1)$$
$$ a^{17} (p+(a+\frac{1}{a})\sqrt{p}+1)(p-(a+\frac{1}{a})\sqrt{p}+1)^2(p-(a^3+\frac{1}{a^3})\sqrt{p}+1)(p^3-(a+\frac{1}{a})p^{\frac{3}{2}}+1)^3$$ 
$$(p^3+(a+\frac{1}{a})p^{\frac{3}{2}}+1)(p^4-(a^2+\frac{1}{a^2})p^2+1)(p^3-(a^3+\frac{1}{a^3})p^{\frac{3}{2}}+1)^3$$
$$ (p^3+(a+\frac{1}{a})p^{\frac{3}{2}}+1)(p^5-(a+\frac{1}{a})p^{\frac{5}{2}}+1)(p^7-(a+\frac{1}{a})p^{\frac{7}{2}}+1)$$
 
Expressions of the form $\displaystyle{p^m+(\mu+\frac{1}{\mu})p^{\frac{m}{2}}+1}$ are always positive for $p \geq 3$ and $\mu \in \mathbb{S}^1$. $(p-1)^5(p+1)^3(p^2+1)(p^2+p+1)p^{6k+2r+14}$ is also positive for all primes. Let us collect all these terms and call it $D(r)$, 
$$ D(r) = p^{6k+2r+14} (p-1)^5(p+1)^3(p^2+1)(p^2+p+1)(p+(a+\frac{1}{a})\sqrt{p}+1)(p-(a+\frac{1}{a})\sqrt{p}+1)^2$$ $$ (p-(a^3+\frac{1}{a^3})\sqrt{p}+1)(p^3-(a+\frac{1}{a})p^{\frac{3}{2}}+1)^3(p^3+(a+\frac{1}{a})p^{\frac{3}{2}}+1)(p^4-(a^2+\frac{1}{a^2})p^2+1)$$ $$(p^3-(a^3+\frac{1}{a^3})p^{\frac{3}{2}}+1)^3(p^3+(a+\frac{1}{a})p^{\frac{3}{2}}+1)(p^5-(a+\frac{1}{a})p^{\frac{5}{2}}+1)(p^7-(a+\frac{1}{a})p^{\frac{7}{2}}+1)$$  
Multiply $D(r)$ on both sides of equation $(5)$ to get 
\begin{equation} D(r) \lambda_F(p^r) =D(r) \sum_{i=0}^{14} e_ig(r-i) \end{equation}

Since $D(r)$ is positive for all prime $p \geq 3$, to determine the sign of $\lambda_{F_f}(p^r)$, it is enough to check the sign on the right side of the above expression. The numerator of $(6)$ is a polynomial in $p^{\frac{1}{2}}$ with coefficients in $\mathbb{Z}[a,a^{-1}]$ and leading term is
$$ p^{61+6k+3r+2kr}(a^{2r}(a^{25}-2a^{23}+2a^{19}-a^{17})).$$
  
The denominator of $(6)$ is $a^{2r+17}(a^2-1)^3(a^2+1)$. This implies that for a fixed $r$ the right side of equation $(6)$ is a polynomial in $p^{\frac{1}{2}}$ with the leading term $p^{61+6k+3r+2kr}$. 
$$\lambda_{F_f}(p^r) = p^{61+6k+3r+2kr}+\sum_{i=0}^{121+12k+6r+4kr} c_i p^{\frac{i}{2}} \ \text{s.t} \ c_i \in \mathbb{Z}[a,a^{-1}] $$
Since $c_i$ are bounded and the bound is dependent on $r$, for a fixed $r$
$$ \lambda_{F_f}(p^r) \geq  0  \ \text{for all large enough $p$.}$$
\end{proof}

\bibliography{Paper}
\bibliographystyle{alpha}

\newpage 
\begin{center}

{\huge \textbf{Appendix}} 

\hspace{1cm}

\begin{tabular}{ | m{1em} | m{14cm}|  } 
	
	\hline
	$a_1$ & $-a^6 p^7((a^2-1)^2 (p-1)^4 (p+1)^2 (a-\sqrt{p})^2 (a \sqrt{p}-1)^2 (a-p^{3/2})^2 (a p^{3/2}-1)^2)^{-1} $ \\ 
	\hline
	$a_2$ & $7 a^6 p^7((a^2-1)^2 (p-1)^4 (p+1)^2 (a-\sqrt{p})^2 (a \sqrt{p}-1)^2 (a-p^{3/2})^2 (a p^{3/2}-1)^2)^{-1} $ \\ 
	\hline
	$a_3$ & $a^2 p^{9/2}((a^2-1) (p-1)^3 (p+1) (a-\sqrt{p})^3 (a+\sqrt{p}) (a \sqrt{p}-1)^3 (a \sqrt{p}+1) (a^3 \sqrt{p}-1)  (a-p^{3/2}) (a p^{3/2}-1) (a^2 p^2-1) (a p^{5/2}-1))^{-1} $ \\ 
	\hline
	$a_4$ & $-a^2 p^2((a^2-1) (p-1)^3 (p+1) (p^2+p+1) (a-\sqrt{p}) (a \sqrt{p}-1)^3 (a \sqrt{p}+1) (a-p^{3/2}) (a p^{3/2}-1)^3 (a^2 p^2-1) (a p^{5/2}-1) (a^3 p^{5/2}+a^2 p^2+a^2 p+a p^{3/2}+a \sqrt{p}+1) (a p^{7/2}-1))^{-1} $ \\
	\hline
	$a_5$ & $-a^6 p^2((p-1)^5 (p+1)^3 (p^2+1) (p^2+p+1) (a-\sqrt{p}) (a \sqrt{p}-1) (a-p^{3/2}) (a p^{3/2}-1) (a^2-p^2) (a^2 p^2-1) (a-p^{5/2}) (a p^{5/2}-1) (a-p^{7/2}) (a p^{7/2}-1))^{-1} $ \\
	\hline 
	$a_6$ & $a^{17} p^{9/2}((a^2-1) (p-1)^3 (p+1) (a-\sqrt{p})^3 (a^3-\sqrt{p}) (a+\sqrt{p}) (a \sqrt{p}-1)^3 (a \sqrt{p}+1) (a-p^{3/2}) (a p^{3/2}-1) (a^2-p^2) (a-p^{5/2}))^{-1} $  \\
	\hline
	$a_7$ & $-a^{32} p^7((a^2-1)^3 (a^2+1) (a-\sqrt{p})^3 (a^3-\sqrt{p}) (a+\sqrt{p}) (a \sqrt{p}-1)^3 (a \sqrt{p}+1) (a^3 \sqrt{p}-1) (a^2+a \sqrt{p}+p) (a^2 p+a \sqrt{p}+1) (a-p^{3/2}) (a p^{3/2}-1) (a^2-p^2) (a^2 p^2-1))^{-1} $ \\
	\hline
	$a_8$ & $-a^2 p^{12}((a^2-1) (p-1)^3 (p+1) (a-\sqrt{p})^3 (a^3-\sqrt{p}) (a+\sqrt{p}) (a \sqrt{p}-1)^3 (a \sqrt{p}+1) (a-p^{3/2}) (a p^{3/2}-1) (a^2-p^2) (a-p^{5/2}))^{-1} $ \\
	\hline
	$a_9$ & $-a^{17} p^{12}((a^2-1) (p-1)^3 (p+1) (a-\sqrt{p})^3 (a+\sqrt{p}) (a \sqrt{p}-1)^3 (a^4 p+a^3 \sqrt{p}-a \sqrt{p}-1) (a-p^{3/2}) (a p^{3/2}-1) (a^3 p^{9/2}-a^2 p^2-a p^{5/2}+1))^{-1} $ \\ 
	\hline
	$a_{10}$ & $-a^6 p^{18}((p-1)^5 (p+1) (p^2+p+1) (a-\sqrt{p})^3 (a+\sqrt{p}) (a \sqrt{p}-1)^3 (a \sqrt{p}+1) (a-p^{3/2}) (a p^{3/2}-1) (a-p^{5/2}) (a p^{5/2}-1))^{-1} $ \\
	\hline
	$a_{11}$ & $a^2 p^{49/2}((a^2-1) (p-1)^3 (p+1) (p^2+p+1) (a \sqrt{p}-1) (a^2-a p^{3/2}-a \sqrt{p}+p^2)^3  (a^4+a^3 p^{3/2}+2 a^3 \sqrt{p}+2 a^2 p^2+2 a^2 p+a p^{3/2}+2 a p^{5/2}+p^3)(a^5 p^{3/2}-a^4 p^5-a^4 p^4-a^4+a^3 p^{5/2}+a^3 p^{15/2}+a^2 p^7+a^2 p^2-a p^{9/2}-a p^{11/2}-a p^{19/2}+p^8)^{-1} $ \\
	\hline
	$a_{12}$ & $a^{17} p^{49/2}((a^2-1) (p-1)^3 (p+1) (p^2+p+1) (a-\sqrt{p}) (a \sqrt{p}-1)^3 (a \sqrt{p}+1) (a^2 p+a \sqrt{p}+1)(a-p^{3/2})(a p^{3/2}-1)^3 (a p^{3/2}+1) (a^2 p^2-1)(a p^{5/2}-1) (a p^{7/2}-1)^{-1} $ \\
	\hline
	$a_{13}$ & $a^6 p^{32}((p-1)^5 (p+1)^3 (p^4+p^3+2 p^2+p+1) ( a^7 p^{3/2}+a^5 p^{3/2}-a^9 p^{5/2}+a^7 p^{5/2}+a^5 p^{5/2}-a^3 p^{5/2}-a^9 p^{7/2}+2 a^7 p^{7/2}+2 a^5 p^{7/2}-a^3 p^{7/2}  -a^9 p^{9/2}  +3 a^7 p^{9/2}+3 a^5 p^{9/2}-a^3 p^{9/2}-2 a^9 p^{11/2}+3 a^7 p^{11/2}+3 a^5 p^{11/2}-2 a^3 p^{11/2}-a^{11} p^{13/2}-3 a^9 p^{13/2}+3 a^7 p^{13/2}+3 a^5 p^{13/2}-3 a^3 p^{13/2}-a p^{13/2}-a^{11} p^{15/2}-4 a^9 p^{15/2}+3 a^7 p^{15/2}+3 a^5 p^{15/2}-4 a^3 p^{15/2}  -a p^{15/2}-a^{11} p^{17/2}-4 a^9 p^{17/2}+3 a^7 p^{17/2}+3 a^5 p^{17/2}-4 a^3 p^{17/2}-a p^{17/2}-a^{11} p^{19/2}-4 a^9 p^{19/2}+4 a^7 p^{19/2}+4 a^5 p^{19/2}-4 a^3 p^{19/2}-a p^{19/2}-a^{11} p^{21/2}-4 a^9 p^{21/2}+4 a^7 p^{21/2}+4 a^5 p^{21/2}-4 a^3 p^{21/2}  -a p^{21/2}-a^{11} p^{23/2}-4 a^9 p^{23/2}+3 a^7 p^{23/2}+3 a^5 p^{23/2}-4 a^3 p^{23/2}-a p^{23/2}-a^{11} p^{25/2}-4 a^9 p^{25/2}+3 a^7 p^{25/2}+3 a^5 p^{25/2}-4 a^3 p^{25/2}-a p^{25/2}-a^{11} p^{27/2}-3 a^9 p^{27/2}+3 a^7 p^{27/2}+3 a^5 p^{27/2}-3 a^3 p^{27/2}  -a p^{27/2}-2 a^9 p^{29/2}+3 a^7 p^{29/2}+3 a^5 p^{29/2}-2 a^3 p^{29/2}-a^9 p^{31/2}+3 a^7 p^{31/2}+3 a^5 p^{31/2}-a^3 p^{31/2}-a^9 p^{33/2}+2 a^7 p^{33/2}+2 a^5 p^{33/2}-a^3 p^{33/2}-a^9 p^{35/2}+a^7 p^{35/2}+a^5 p^{35/2}-a^3 p^{35/2}+a^7 p^{37/2}+a^5 p^{37/2}+a^7 p^{39/2}+a^5 p^{39/2}-a^6 p^{20}-a^6 p^{19}-2 a^6 p^{18}-3 a^6 p^{17}+a^{10} p^{16}-4 a^6 p^{16}+a^2 p^{16}+a^{10} p^{15}+a^8 p^{15}-4 a^6 p^{15}+a^4 p^{15}+a^2 p^{15}+2 a^{10} p^{14}+a^8 p^{14}-5 a^6 p^{14}+a^4 p^{14}+2 a^2 p^{14}+2 a^{10} p^{13}+a^8 p^{13}-6 a^6 p^{13}+a^4 p^{13}+2 a^2 p^{13}+2 a^{10} p^{12}+a^8 p^{12}-7 a^6 p^{12}+a^4 p^{12}+2 a^2 p^{12}+3 a^{10} p^{11}+2 a^8 p^{11}-6 a^6 p^{11}+2 a^4 p^{11}+3 a^2 p^{11}+a^{12} p^{10}+4 a^{10} p^{10}+3 a^8 p^{10}-6 a^6 p^{10}+3 a^4 p^{10}+4 a^2 p^{10}+p^{10}+3 a^{10} p^9+2 a^8 p^9-6 a^6 p^9+2 a^4 p^9+3 a^2 p^9+2 a^{10} p^8+a^8 p^8-7 a^6 p^8+a^4 p^8+2 a^2 p^8+2 a^{10} p^7+a^8 p^7-6 a^6 p^7+a^4 p^7+2 a^2 p^7+2 a^{10} p^6+a^8 p^6-5 a^6 p^6+a^4 p^6+2 a^2 p^6+a^{10} p^5+a^8 p^5-4 a^6 p^5+a^4 p^5+a^2 p^5+a^{10} p^4-4 a^6 p^4+a^2 p^4-3 a^6 p^3-2 a^6 p^2-a^6 p-a^6+a^7 \sqrt{p}+a^5 \sqrt{p})^{-1}$ \\ 
	\hline
	$a_{14}$ & $a^2 p^7((a^2-1)^3 (a^2+1) (a-\sqrt{p})^3 (a^3-\sqrt{p}) (a+\sqrt{p}) (a \sqrt{p}-1)^3 (a \sqrt{p}+1) (a^3 \sqrt{p}-1) (a^2+a \sqrt{p}+p) (a^2 p+a \sqrt{p}+1) (a-p^{3/2}) (a p^{3/2}-1) (a^2-p^2) (a^2 p^2-1))^{-1} $ \\ 
	\hline
	$a_{15}$ & $a^6 p^3((p-1)^5 (p+1) (p^2+p+1) (a-\sqrt{p})^3 (a+\sqrt{p}) (a \sqrt{p}-1)^3 (a \sqrt{p}+1)(a-p^{3/2}) (a p^{3/2}-1) (a-p^{5/2}) (a p^{5/2}-1))^{-1} $ \\ 
	\hline
	$a_{16}$ & $-a^{17} p^2((a^2-1) (p-1)^3 (p+1) (p^2+p+1) (a-\sqrt{p})^3 (a+\sqrt{p}) (a \sqrt{p}-1) (a^2+a \sqrt{p}+p) (a-p^{3/2})^3 (a+p^{3/2}) (a p^{3/2}-1) (a^2-p^2) (a-p^{5/2}) (a-p^{7/2}))^{-1} $ \\	
	\hline
\end{tabular}
\end{center}

\end{document}